\theoremstyle{theorem}
\newtheorem*{theorem}{Theorem}
\theoremstyle{theorem}
\newtheorem*{lemma}{Lemma}
\theoremstyle{theorem}
\newtheorem*{proposition}{Proposition}
\theoremstyle{theorem}
\newtheorem*{definition}{Definition}
\theoremstyle{corollary}
\newtheorem*{cor}{Corollary}
\theoremstyle{question}
\newtheorem*{question}{Question}
\theoremstyle{remark}
\newtheorem*{remark}{Remark}
\newcommand{\perim}{\mathrm{perim}}
\newcommand{\area}{\mathrm{area}}
\newcommand{\cP}{\mathcal{P}}
\newcommand{\cG}{\mathcal{G}}
\newcommand{\upT}{\mathrm{T}}
\begin{document}

\author{Jonah Gaster}

\date{September 7, 2020}

\address{Department of Mathematical Sciences, University of Wisconsin-Milwaukee}
\email{gaster@uwm.edu}
\keywords{Curves on surfaces, hyperbolic geometry}

\title{A short proof of a conjecture of Aougab-Huang}
\begin{abstract}
In response to Sanki-Vadnere \cite{SV}, we present a short proof of the following theorem: a pair of simple curves on a hyperbolic surface whose complementary regions are disks has length at least half the perimeter of the regular right-angled $(8g-4)$-gon.
\end{abstract}

\maketitle

\section{introduction}
Let $S=S_g$ be an oriented closed surface of genus $g$, and let $\cP=\cP_g$ be the hyperbolic regular right-angled $(8g-4)$-gon. 
A set of curves on $S$ is \emph{filling} if the complementary components are disks.

\begin{theorem}
\label{main thm}
A filling pair of simple geodesics on a hyperbolic surface homeomorphic to $S$ has length at least $\frac12\perim(\cP)$.
\end{theorem}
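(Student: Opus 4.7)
The plan is to reformulate the inequality as an isoperimetric statement about the hyperbolic polygons $P_1, \ldots, P_F$ into which $\alpha \cup \beta$ cuts $S$. Writing $i = i(\alpha, \beta)$, Euler's formula gives $F = 2 - 2g + i$; each $P_j$ has an even number $n_j$ of sides alternating between $\alpha$- and $\beta$-arcs, with $\sum n_j = 4i$, and since each arc lies on exactly two polygons, $\ell(\alpha) + \ell(\beta) = \tfrac12 \sum_j \perim(P_j)$. The theorem thus reduces to $\sum_j \perim(P_j) \geq \perim(\cP)$. Summing the Gauss-Bonnet identity $\area(P_j) = (n_j-2)\pi - \sum_k \theta_{j,k}$ and using that the four interior angles at each of the $i$ crossings total $2\pi$, one finds $\sum_j \area(P_j) = 2\pi(2g-2) = \area(\cP)$; this equality of total areas is the geometric reason $\cP$ is the natural comparison.

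I would then invoke the classical hyperbolic polygon isoperimetric inequality: among hyperbolic $n$-gons of area $A$, the regular one has the least perimeter. Writing $L^*(n, A)$ for this minimum, $\perim(P_j) \geq L^*(n_j, A_j)$ for each $j$ (valid because the four angles at each crossing are each strictly less than $\pi$, so each $P_j$ is convex). The theorem then reduces to the purely analytic aggregation inequality
$$\sum_{j=1}^F L^*(n_j, A_j) \;\geq\; L^*(8g-4, (4g-4)\pi) \;=\; \perim(\cP),$$
subject to $\sum n_j = 4F + 8g - 8$ and $\sum A_j = (4g-4)\pi$. The minimum-intersection case $F = 1$ (forcing $n_1 = 8g-4$, $A_1 = (4g-4)\pi$, and $i = 2g-1$) is the equality case, realized by $\cP$ itself.

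The main obstacle is the aggregation step for $F \geq 2$. The natural route is to establish a super-additivity property of the form
$$L^*(n_1, A_1) + L^*(n_2, A_2) \;\geq\; L^*(n_1 + n_2 - 4, \; A_1 + A_2),$$
where the decrement of $4$ is precisely what preserves the filling-pair identity $\sum n_j - 4F = 8g-8$ across a ``merge''; iterating across the $F-1$ required merges collapses the sum to the single-polygon case, landing exactly at $L^*(8g-4, (4g-4)\pi)$. Verifying this super-additivity is the analytic heart of the matter: it amounts to a careful convexity/monotonicity analysis of $L^*(n, A)$ in its two variables, based on the closed-form expression $\cosh(s/2) = \cos(\pi/n)/\sin\bigl(((n-2)\pi - A)/(2n)\bigr)$ for the side of the regular hyperbolic $n$-gon of area $A$. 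That is where I expect the bulk of the technical work to lie.
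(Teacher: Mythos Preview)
Your proposal is essentially the Sanki--Vadnere approach \cite{SV}, which the paper explicitly discusses in the introduction as an alternative route. Both arguments begin identically---reducing to $\sum_j \perim(P_j) \ge \perim(\cP)$ via Euler and Gauss--Bonnet---but then diverge. You pass immediately to regular models and attack the analytic aggregation inequality $\sum_j L^*(n_j,A_j)\ge L^*(8g-4,(4g-4)\pi)$ in the two variables $(n,A)$; the paper instead \emph{glues the actual polygons} $P_j$ along a \emph{spread} spanning tree of the dual graph, producing (when one curve is nonseparating) a single $(8g-4)$-gon $Q$ with $\sum\perim(P_j)\ge\perim(Q)$, so that one application of Bezdek finishes. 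When both curves separate, the gluing yields two right-angled regular comparison polygons, and only a one-variable inequality among right-angled regular polygons is needed---far lighter than your general super-additivity. What your route buys is uniformity (no separating/nonseparating split) and applicability to polygon collections that do not tile a surface; what the paper's route buys is that the heavy analysis of $L^*(n,A)$ is replaced by a short combinatorial lemma, and the equality case (complement isometric to a single $\cP$) drops out for free since every nontrivial gluing step is strict. One caution on your formulation: the pairwise merge $L^*(n_1,A_1)+L^*(n_2,A_2)\ge L^*(n_1+n_2-4,A_1+A_2)$ is stronger than what Sanki--Vadnere actually prove (a single comparison of the full sum against the final polygon, under the hypothesis that the target is not acute), and you should expect to need a non-acuteness condition on the merged polygon for the inequality to go through.
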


This theorem was conjectured by Aougab-Huang \cite{AH} in the context of their study of \emph{minimal} filling pairs, i.e.~those for which the complement has one component.
For minimal filling pairs, the above theorem follows directly from an isoperimetric inequality in the hyperbolic plane, due to Bezdek \cite{B}. 
When there are two complementary polygons, one may glue them together along a common side. After erasing two superfluous vertices, the result is an $(8g-4)$-gon, and the same isoperimetric inequality holds \cite[Cor.~4.5]{AH}.

The purpose of this note is to demonstrate that Aougab-Huang's approach generalizes to arbitarily many components:
the complementary pieces can be glued together so that Bezdek's isoperimetric inequality becomes available.
Of course, a difficulty arises, in that the number of sides of the polygon so obtained may have become unwieldy. 
Here one should glue with a bit more care, avoiding the possibility of corners with angle greater than $\pi$.

An alternative technical approach to the above theorem was developed prior to the present paper by Sanki-Vadnere \cite{SV}.
There, the surface $S$ plays a lesser role, and one compares perimeters of the complementary pieces to that of a single regular polygon directly.
Sanki-Vadnere show: let $P_i$ be a polygon with $2n_i$ sides for $i=1,\ldots, r$, and suppose that $P$ is the regular hyperbolic polygon with $\area(P)=\sum \area(P_i)$ and $2m$ sides, where $m+2r=2+\sum n_i$. 
Then, provided $P$ is not acute, we have $\sum \perim(P_i) \ge \perim(P)$.
This somewhat complicated statement implies the Aougab-Huang conjecture: 
The sum of the lengths of the geodesics is at least half of the sum of the perimeters of their complementary components, and by Gauss-Bonnet the polygon obtained above is $P\approx\cP$.

The Sanki-Vadnere result is more general than the above theorem, as it applies to polygons that do not tile a closed surface. 
On the other hand, the approach contained here demonstrates slightly more: if one obtains equality in the above Theorem, then the complement of the geodesics is isometric to $\cP$ (see the Corollary below).

\subsection*{Acknowledgements}
Thanks to Bidyut Sanki and Arya Vadnere, and to Tarik Aougab, Chris Hruska, and Burns Healy for helpful conversations.

\section{A lemma about spanning trees}

Let $G$ be a graph embedded on $S$.
For each vertex $p$ of $G$, the orientation of the tangent space $\upT_p S$ endows the edges incident to $p$ with a cyclic order. 

\begin{definition}
\label{def:spread}
A subgraph $H\subset G$ is \emph{spread} if: for every vertex $p\in H$ and edges $e,e'$ of $H$ at $p$, in the cyclic order at $p$ the edges $e$ and $e'$ are not consecutive.
\end{definition}

\begin{lemma}\label{lem:spread spanning tree}
Let $\alpha,\beta$ be a filling pair of simple closed curves in minimal position on $S$.
If $\alpha$ is nonseparating, then the dual graph to $\alpha\cup \beta$ admits a spread spanning tree. 
If $\alpha$ is separating, the dual graph admits a spread spanning forest with two components.
\end{lemma}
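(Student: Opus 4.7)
The plan is to exhibit a spread subgraph of $G$ with the right connectivity by keeping only one ``color'' of edges. Since $\alpha,\beta$ cross transversely, each complementary polygon has its boundary sides alternating between arcs of $\alpha$ and arcs of $\beta$ in cyclic order (since every corner of a polygon is an $\alpha\cap\beta$ intersection, with one $\alpha$-arc and one $\beta$-arc emerging from it). Translating this to the dual graph $G$, the edges at a vertex $p$ partition into \emph{$\alpha$-edges} and \emph{$\beta$-edges} according to which curve the dual edge crosses, and these alternate in the cyclic order at $p$. I would then define $G_\beta\subset G$ to be the spanning subgraph containing all vertices of $G$ and only the $\beta$-edges. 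The alternation immediately gives that no two edges of $G_\beta$ at a common vertex are consecutive in the cyclic order, so $G_\beta$ is automatically spread in the sense of Definition~\ref{def:spread}. Minimal position of $\alpha,\beta$ rules out bigons, so every polygon has at least four sides and hence at least two $\beta$-arcs on its boundary; in particular, $G_\beta$ genuinely spans.

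The main step is the identification of the connected components of $G_\beta$ with those of $S\setminus\alpha$. One inclusion follows from the observation that the interior of any $\beta$-arc is disjoint from $\alpha$, so crossing a $\beta$-edge keeps one inside a single component of $S\setminus\alpha$, and therefore paths in $G_\beta$ lift to paths in $S\setminus\alpha$. For the reverse inclusion, I would take a path in $S\setminus\alpha$ between interior points of two polygons, perturb it to meet $\beta$ transversely, and read off the sequence of $\beta$-edges traversed as the required path in $G_\beta$.

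Since $S\setminus\alpha$ is connected when $\alpha$ is nonseparating and has two components when $\alpha$ is separating, the same is true of $G_\beta$. Choosing a spanning tree of $G_\beta$ in the first case, or one spanning tree per component in the second, produces the desired spread spanning tree or spread spanning forest with two components.

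The main point that requires care is the component identification, but this is a standard transversality argument; the pleasant feature of the strategy is that restricting to a single color of edges automatically forces the spread condition, so no further combinatorial work is needed once the component count of $G_\beta$ is determined.
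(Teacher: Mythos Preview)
Your proposal is correct and is essentially the paper's own argument: the paper's subgraph $\Gamma_0$ of ``horizontal'' edges in its annulus model is exactly your $G_\beta$, and its component count via $\partial A/\!\sim$ amounts to the same identification with the components of $S\setminus\alpha$ that you obtain by the transversality argument. The only difference is packaging---the paper encodes the alternation and the separating/nonseparating dichotomy through the square-complex annulus, while you read both directly off the coloring of dual edges.
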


\begin{proof}

Observe that by the assumptions $S$ is homeomorphic to $A/\!\!\sim$, where $A$ is a Euclidean annulus formed by $|\alpha \cap \beta|$ unit squares in a ring, and where $\sim$ is a side-pairing of $A$, so that the core curve of $A$ projects to the homotopy class of $\alpha$ under $A\rightarrow A/\!\!\sim \ \approx S$.
Let $G$ be the 1-skeleton of $A/\!\!\sim$, or, equivalently, the graph dual to $\alpha\cup \beta$. 

The square complex $A/\!\!\sim$ partitions the edges of $G$ into horizontal and vertical.
We suppose that $\alpha$ is horizontal, and let $\Gamma_0$ be the subgraph of $G$ spanned by the horizontal edges. 
Evidently, $\Gamma_0$ spans $G$, since every vertex is incident to a horizontal edge. Moreover, $\Gamma_0$ is spread, since edges alternate between horizontal and vertical at each vertex of $G$.

Now $\Gamma_0$ has either one or two components, according to whether $\alpha$ is nonseparating or separating. Indeed, $\Gamma_0$ is the image of $\partial A\to \partial A/ \!\!\sim$, and the two components of $\partial A$ are connected in the image exactly when $\alpha$ is nonseparating.
We now apply the while-loop:
\begin{align*}
(\ast) \ \ &\text{While $\Gamma_i$ has an embedded loop, let $\Gamma_{i+1}$ be the graph obtained}\\ 
&\text{by deleting an edge that lies in an embedded loop from $\Gamma_i$.}
\end{align*}
This algorithm terminates in a spread spanning tree if $\alpha$ is nonseparating, and it terminates in a spread spanning forest with two components if $\alpha$ is separating.
\end{proof}

\begin{figure}[h]
\includegraphics[width=8cm]{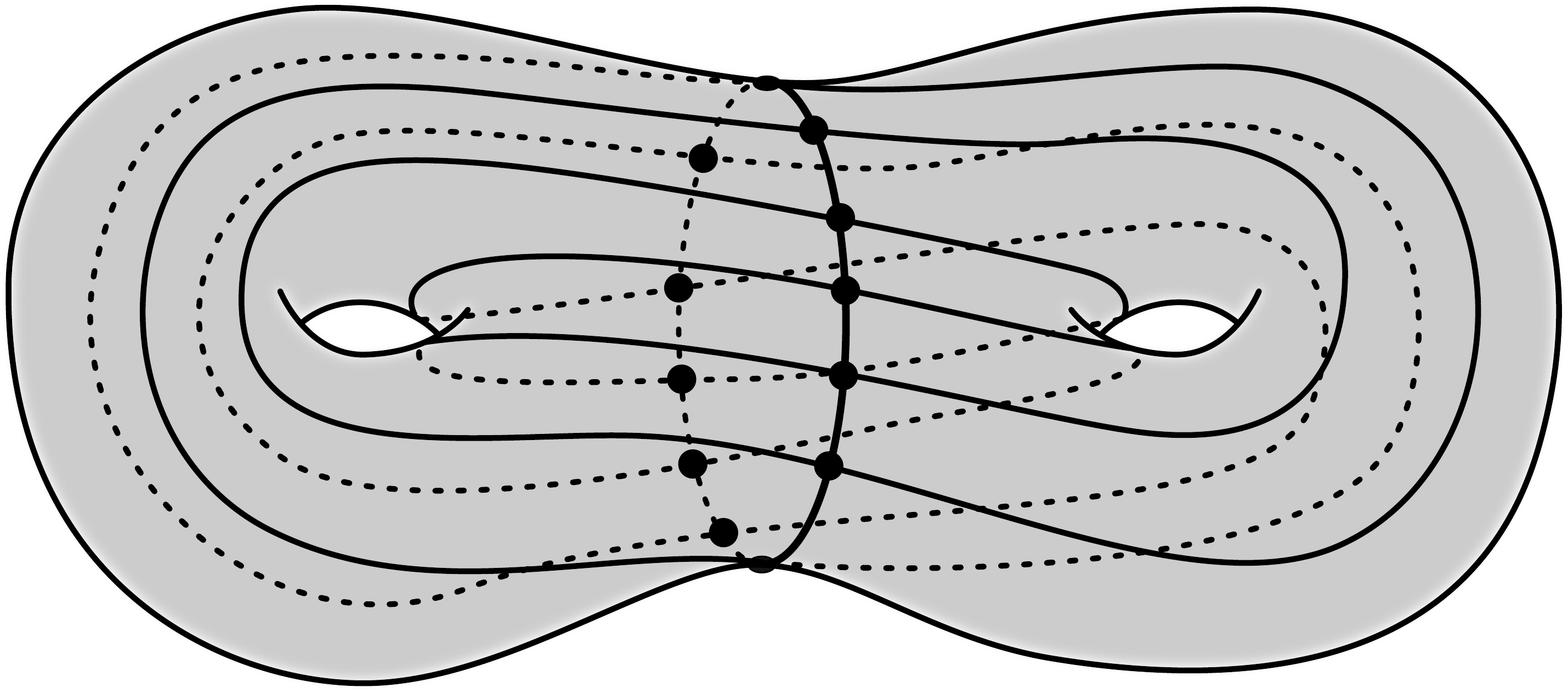}\\
\vspace{.5cm}
\includegraphics[width=\textwidth]{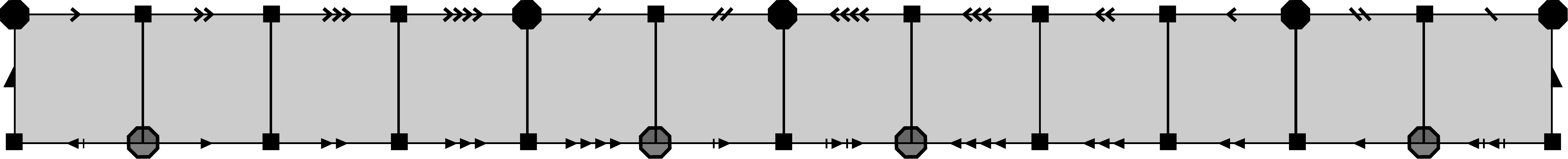}
\caption{A filling pair whose dual graph contains no spread spanning trees.}
\label{fig:counterexample}
\end{figure}

\begin{remark}
The separating / nonseparating dichotomy in this Lemma leads to a dichotomy in the proof of the Main Theorem. 
If one of the two curves is nonseparating, the Aougab-Huang approach goes through unmolested. 
When both curves are separating, more care must be taken.
This dichotomy is not artificial:
Figure~\ref{fig:counterexample} shows a filling pair of separating curves on $S_2$ whose complementary components consist of two octagons and eight squares. 
One can check that there does not exist a spread path between the two octagons.
\end{remark}

\begin{question}
Which filling graphs embedded in $S$ admit spread spanning trees? 
\end{question} 

\section{The proof of the main theorem}

We now mimic the proof of \cite[Cor.~4.5]{AH}, gluing together the complementary polygons to a filling pair of simple geodesics using the above Lemma.
We indicate the perimeter of a polygon $Q$ by $\perim(Q)$ and its number of sides (or, equally, vertices) by $n(Q)$.

\begin{proof}[Proof of Main Theorem]
Let $\alpha,\beta$ be simple geodesics on $X\approx S$, and
let $\cG\subset X$ be the graph induced by $\alpha\cup\beta\subset X$.
The complementary components of $\cG$ determine hyperbolic polygons $P_1,\ldots,P_r$, and
the length of $\cG$ is equal to $\frac12 \sum \perim(P_k)$.

Observe that the sum $\sum n(P_k)$ is two times the number of edges of $\cG$, which is equal to four times the number of vertices. The number of faces is $r$, so by Gauss-Bonnet we find
\begin{equation}
\label{eq:|Q|}
\sum_{k=1}^r n(P_k) = 8g-8+4r~.
\end{equation}

Suppose first that $\alpha$ is nonseparating.
By the lemma, the dual graph to $\cG$ admits a spread spanning tree $T$, which we may regard as embedded in $X$ dual to $\cG$.
Let $\hat Q = \sqcup_k P_k / \sim_T$ be obtained as follows:
for each edge $e$ of $T$ whose endpoints are polygons $P_i$ and $P_j$, we identify the sides of $P_i$ and $P_j$ along their shared side dual to $e$.
As $T$ is a tree, $\hat Q$ is again a polygon.
Moreover, the vertices of $\hat Q$ can be partitioned into \emph{old} vertices, whose $\sim_T$-equivalence class is a singleton, and the complementary \emph{new} vertices. 

Choose a vertex $q\in \cG$.
Because $T$ is spread, the edges of $\cG$ incident to $q$ and dual to edges of $T$ are non-consecutive in the cyclic order of $\cG$ at $q$.
Each $\sim_T$-equivalence class of vertices of $\sqcup_k P_k$ therefore has either one or two elements, and the number of new vertices is exactly $2e(T)$, where $e(T)$ is the number of edges of $T$. 
Moreover, any new vertex of $\hat Q$ must have angle $\pi$, so we may construct a polygon $Q$ by erasing the new vertices of $\hat Q$.

Now it is evident that the number of vertices of $Q$ is equal to the number of old vertices of $\hat Q$, so $n(Q) = n(\hat Q) - 2e(T)$. 
Because $\sim_T$ erases two edges of $\sqcup_kP_k$ for each edge of $T$,
\[
n(\hat Q) = -2e(T) + \sum_{k=1}^r n(P_i)~. 
\]
Together with \eqref{eq:|Q|}, this implies that $n(Q) = -4e(T) + 8g-8+4r$.

As $T$ is spanning, its number of vertices is $r$, and as $T$ is a tree we find $e(T)=r-1$. Hence
\[
n(Q) = -4(r-1) + 8g-8+4r = 8g-4~,
\]
and by \cite{B} we find $\perim(Q) \ge \perim(\cP)$. 
Of course, $\sum \perim(P_k) \ge \perim(Q)$.

Now suppose that $\alpha$ separates $X$ into totally geodesic subsurfaces $X_1$ and $X_2$, of genus $g_1$ and $g_2$ respectively. 
In that case, the Lemma provides the spanning forest $T_1\sqcup T_2\subset \cG$, where $T_1$ and $T_2$ are spread trees. 
The same construction above yields polygons $Q_1$ and $Q_2$ with $\sum \perim(P_k) \ge \perim(Q_1) + \perim(Q_2)$. Moreover, $X_i$ is isometric to a gluing of $Q_i$.

Performing the calculation \eqref{eq:|Q|} for each subsurface, we find $n(Q_i) = 8g_i$. Now let $\hat Q_i$ be a regular $8g_i$-gon with area $\pi(4g_i-2)$, so that by Bezdek we find $\perim(Q_i)\ge \perim(\hat Q_i)$. Observe that $\hat Q_i$ is necessarily right-angled. Indeed, the common angle of $\hat Q_i$ is given by
\[
\frac \pi{8 g_i} (8g_i-2-(4g_i-2)) = \frac \pi 2~.
\]
The following comparison now completes the proof:
\begin{proposition}
\label{prop: right-angled perims}
Suppose that $R_1$, $R_2$, and $R$ are regular right-angled polygons with $n(R_i)=n_i$, $n(R)=m$, and suppose that $n_1+n_2=m+4$. Then $\perim(R_1)+\perim(R_2) > \perim(R)$.\qedhere
\end{proposition}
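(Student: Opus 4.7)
The plan is to establish strict concavity of the perimeter function $P(n) := \perim(R_n)$, regarded as a function of the real parameter $n \in [4, \infty)$, and then extract the proposition as an instance of Jensen's inequality applied to the pairs $\{n_1, n_2\}$ and $\{4, m\}$.

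First I would derive a clean formula for $P$. Connecting the center of $R_n$ to its vertices and to its edge midpoints partitions $R_n$ into $2n$ congruent hyperbolic right triangles with angles $(\pi/4, \pi/2, \pi/n)$; standard hyperbolic trigonometry gives $\cosh(\psi) = \sqrt{2}\cos(\pi/n)$, where $\psi := s_n/2$ is half the side length, so
\[
P(n) = 2n\,\operatorname{arccosh}\!\bigl(\sqrt{2}\cos(\pi/n)\bigr),
\]
which extends continuously to $P(4) = 0$.

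Next comes the concavity computation. The cleanest parameterization is by $\psi \in [0, \operatorname{arccosh}\sqrt{2})$: setting $v(\psi) := \arccos(\cosh\psi/\sqrt{2})$, we have $n = \pi/v(\psi)$ and $P = 2\pi\psi/v(\psi)$. A chain-rule computation yields
\[
\frac{d^2 P}{dn^2} = -\frac{2 v^3 \, v''(\psi)}{\pi\, v'(\psi)^3},
\]
which (given $v > 0$ and $v' < 0$ throughout the domain) has the opposite sign to $v''$. Differentiating $v = \arccos(f)$ twice with $f(\psi) = \cosh\psi/\sqrt{2}$ gives $v''(\psi) = -[f''(1-f^2) + f(f')^2]/(1-f^2)^{3/2}$, and the bracketed numerator simplifies via $\cosh^2\psi - \sinh^2\psi = 1$ to $\cosh\psi/(2\sqrt{2})$, which is positive. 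Hence $v'' < 0$, so $P$ is strictly concave on $(4, \infty)$.

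To finish, take $n_1, n_2 \geq 5$ with $n_1 + n_2 = m + 4$ and WLOG $n_1 \leq n_2$; then $4 < n_1 \leq n_2 < m$, and the pair $(n_1, n_2)$ is a convex combination of $(4, m)$ with the same sum. Strict concavity of $P$ together with $P(4) = 0$ yields
\[
\perim(R_1) + \perim(R_2) = P(n_1) + P(n_2) > P(4) + P(m) = \perim(R).
\]
I expect the main obstacle to be the concavity computation; the key trick is choosing $\psi$ (rather than $n$ or $\pi/n$) as the parameter, which makes $v''$ simplify cleanly via the Pythagorean identity $\cosh^2 - \sinh^2 = 1$.
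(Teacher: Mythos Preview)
Your proof is correct. Both your argument and the paper's hinge on the strict concavity of the perimeter function $P(n)=2n\cosh^{-1}(\sqrt{2}\cos(\pi/n))$; the paper computes $P''$ directly in the variable $n$, while you reparametrize by the half-side $\psi$ and reduce the sign question to that of $v''$, arriving (after unwinding) at the same second derivative. The endgames differ. The paper uses concavity to push to the extremal \emph{integer} case $\{n_1,n_2\}=\{5,m-1\}$ and then proves $f(5)+f(m-1)>f(m)$ via the chain $f(m)-f(m-1)<f'(m-1)\le f'(5)<f(5)$, the last inequality requiring an explicit numerical check involving the golden ratio. Your approach is slicker: by extending $P$ continuously to the degenerate value $P(4)=0$ and noting that $(n_1,n_2)$ lies strictly inside the segment from $(4,m)$ with the same coordinate sum, the Karamata-type inequality $P(n_1)+P(n_2)>P(4)+P(m)=P(m)$ finishes with no numerics at all. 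The only mild subtlety worth a sentence in a final write-up is that strict concavity is established on the open interval $(4,\infty)$ while one comparison point sits at the boundary; this is harmless, since $s\mapsto P(n_1-s)+P(n_2+s)$ is continuous on $[0,n_1-4]$ and has strictly negative derivative on the interior.
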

\end{proof}

Observe that we may conclude as well: if $r>1$, then $\sum\perim(P_k)>\perim(\cP)$. 
Therefore,

\begin{cor}
\label{cor:uniqueness}
With the setup of the Theorem, if we find equality in the conclusion, then the filling pair is minimal and $X$ is obtained as a gluing of $\cP$. 
\end{cor}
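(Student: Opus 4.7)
The plan is to revisit each inequality used in the proof of the Main Theorem and check when it can be an equality. The proof splits into the case when one of $\alpha,\beta$ is nonseparating (call it Case~1) and the case when both are separating (Case~2). I would first argue that Case~2 cannot produce equality, then trace the equality conditions through Case~1.

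For Case~2, the displayed chain of inequalities in the proof reads
\[
\sum_k \perim(P_k) \ \ge\ \perim(Q_1)+\perim(Q_2)\ \ge\ \perim(\hat Q_1)+\perim(\hat Q_2)\ >\ \perim(\cP),
\]
where the final \emph{strict} inequality is exactly Proposition~\ref{prop: right-angled perims} applied with $n_i=8g_i$ and $m=8g-4$. Hence if $\sum\perim(P_k)=\perim(\cP)$, then Case~2 is excluded, and (after relabeling) we may assume $\alpha$ is nonseparating and we are in Case~1.

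In Case~1 we had $\sum_k\perim(P_k)\ge\perim(Q)\ge\perim(\cP)$, so equality in the Main Theorem forces each of these to be an equality. The first one is the content of the gluing construction: since erasing new vertices does not change the perimeter, $\perim(Q)=\perim(\hat Q)$, and $\hat Q$ is obtained from $\sqcup_k P_k$ by pairing two sides (of equal length) for each edge $e\in T$, so
\[
\sum_k\perim(P_k)-\perim(Q) \ =\ 2\sum_{e\in T}\ell(e^*),
\]
where $\ell(e^*)$ is the positive length of the geodesic edge of $\cG$ dual to $e$. Therefore equality forces $e(T)=0$; since $T$ is a spanning tree of the dual graph, this means the dual graph is a single vertex, i.e.\ $r=1$ and the filling pair is minimal. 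The second equality, $\perim(Q)=\perim(\cP)$, combined with $n(Q)=8g-4$ and $\area(Q)=\area(X)=(4g-4)\pi=\area(\cP)$, is exactly the equality case of Bezdek's isoperimetric inequality~\cite{B}, which gives $Q\cong\cP$ as hyperbolic polygons. Since $r=1$, we have $P_1=Q\cong\cP$, and $X$ is recovered as the corresponding side-pairing of $\cP$.

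The only nontrivial input is the rigidity statement in Bezdek's theorem (that equality in the isoperimetric inequality is achieved only by the regular polygon of the given area and side-count); everything else is a bookkeeping matter of reading off equality conditions from the proof of the Main Theorem, so I do not anticipate a substantive obstacle.
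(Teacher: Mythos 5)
Your proposal is correct and follows essentially the same route as the paper, which disposes of the corollary by observing that $r>1$ forces $\sum\perim(P_k)>\perim(\cP)$ (strictness from the glued sides of positive length when $e(T)\ge 1$, and from the strict inequality of the Proposition in the separating case) and then, implicitly, invoking the equality case of Bezdek's isoperimetric inequality. You simply spell out the details that the paper leaves as a one-line observation.
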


It remains to prove the above proposition.
We emphasize that the right-angled hypothesis makes this statement far simpler than the involved calculations of \cite{SV}.

\begin{proof}[Proof of Proposition]
As $R_i$ is a right-angled hyperbolic polygon, we have $n_i\ge5$, so the constraint $n_1+n_2=m+4$ implies that $n_1,n_2\in\{5,\ldots,m-1\}$. 
We first show that, for fixed $m$, the sum $\perim(R_1)+\perim(R_2)$ is minimized for $\{n_1,n_2\}=\{5,m-1\}$. 

One may use hyperbolic trigonometry to calculate the perimeter of a regular polygon (see \cite[p.~97]{Ratcliffe}). 
Using the right-angled assumption we find that $\perim(R_1) = f(n_1)$, $\perim(R_2)=f(n_2)$, and $\perim(R)=f(m)$, where 
\[
f(x)= 2x \cosh^{-1}\left(\sqrt 2 \cos\left(\frac{\pi}{x}\right) \right)~. 
\]
It is straightforward to compute
\begin{align*}
f'(x) = 2 \cosh^{-1}\left(\sqrt 2 \cos\left(\frac{\pi}{x}\right) \right) + 
\frac{2\pi \sqrt2 \sin\left( \frac \pi x\right) }{x\sqrt{\cos\left(\frac{2\pi}x\right)}} ~, \text{ and } \ 
f''(x) = -\frac{2\pi^2\sqrt{2} \ \cos\left( \frac \pi x\right)}{x^3 \ \sqrt{\cos^3 \left( \frac{2\pi}x\right)}} ~.
\end{align*}

Because $f$ is concave (i.e.~$f''(x)<0$), the function $f(x)+f(C-x)$ is concave as well, for any $x$ and constant $C$ so that the sum is defined.
Therefore, as a function of $n_1\in(4,m)$, $\perim(R_1)+\perim(R_2) = f(n_1)+f(m+4-n_1)$ is concave. 
As $n_1$ is an integer, we must have $\perim(R_1)+\perim(R_2)\ge f(5)+f(m-1)$. 
Observe that the desired inequality now follows from $f(5)+f(m-1)> f(m)$.
Concavity of $f$ implies that 
\[
f(m) - f(m-1) < (m-(m-1)) \cdot f'(m)=f'(m) \le f'(5)~.
\]
It remains to show that $f'(5)\le f(5)$.
As $\cos\left(\frac\pi 5\right) = \frac\Phi 2$, where $\Phi= \frac{1+\sqrt 5}{2}$, we compute
\begin{align*}
f'(5) = 2\cosh^{-1} 
\left( \frac1{\sqrt2} \Phi\right) + \frac{2\pi}{5}\sqrt{\Phi+\frac1\Phi}~, \text{ and }  \
f(5) = 10\cosh^{-1} 
\left( \frac1{\sqrt2} \Phi\right) ~.
\end{align*}
As $\pi<4$, we find that $f'(5)< f(5)$ is implied by $\frac15 \sqrt{\Phi+\frac1\Phi} \le \cosh^{-1}\left( \frac1{\sqrt2} \Phi\right)$.

While one can check that $\frac15 \sqrt{\Phi+\frac1\Phi} \approx .299$ and $\cosh^{-1}\left( \frac1{\sqrt2} \Phi\right) \approx .531$ with a calculator, in fact this can be checked by hand.

Exploiting $\Phi^2=1+\Phi$, $\frac1\Phi = \Phi-1$, and $\cosh^{-1}x=\log(x+\sqrt{x^2-1})$, one finds:
\begin{align*}
\cosh^{-1}\left( \frac1{\sqrt2} \Phi \right) &= 
\frac12\log\left( \Phi+\sqrt\Phi\right) > \frac12 >\frac15 \sqrt{\Phi+\frac1\Phi}~, 
\end{align*}
where on the last line we've used the elementary estimates $\Phi+\sqrt\Phi >e$ and $\Phi+\frac1\Phi <3$. \qedhere

\end{proof}

\end{document}